\documentclass[a4paper]{scrartcl}
\usepackage[latin1]{inputenc}
\usepackage{amsmath}
\usepackage{bbm}
\usepackage{amssymb}
\usepackage{amsthm}
\usepackage{pdfsync}
\usepackage{esint}
\newtheorem{lemma}{Lemma}[section]
\newtheorem{theorem}{Theorem}[section]

\theoremstyle{remark}
\newtheorem{remark}{Remark}[section]
\newcommand{\cotan}{\operatorname{cotan}}

\numberwithin{equation}{section}

\begin{document}
\title{A Note on Extended Binomial Coefficients}
\author{Thorsten Neuschel
  \thanks{Department of Mathematics, KU Leuven, Celestijnenlaan 200B box 2400, BE-3001 Leuven, Belgium. E-mail: Thorsten.Neuschel@wis.kuleuven.be}}

\date{}

\maketitle

\paragraph{Abstract} We study the distribution of the extended binomial coefficients by deriving a complete asymptotic expansion with uniform error terms. We obtain the expansion from a local central limit theorem and we state all coefficients explicitly as sums of Hermite polynomials and Bernoulli numbers.

\paragraph{Keywords} extended binomial coefficient, composition, complete asymptotic expansion, local central limit theorem, normal approximation, Hermite polynomial, Bernoulli number

\paragraph{Mathematics Subject Classification (2010)} Primary 11P82; Secondary 05A16, 41A60.
\section{Introduction}
The extended binomial coefficients, occasionally called polynomial coefficients (e.g., \cite[p.\ 77]{Comtet}), are defined as the coefficients in the expansion
\begin{equation}\label{ext}\sum_{k=0}^{\infty}\binom{n}{k}^{(q)} x^k = \left(1+x+x^2+\cdots+x^{q}\right)^n,\quad n,q \in \mathbb{N}=\{1,2,\ldots\}.
\end{equation}
In written form, they presumably appeared for the first time in works by De Moivre \cite[p.\ 41]{DeMoivre} and later they also were addressed by Euler \cite{Euler}. Since then, the extended binomial coefficients played a role mainly in the theory of compositions of integers as the number \(c(k,n,q)\) of compositions of \(k\) with \(n\) parts not exceeding \(q\) is given by
\[c(k,n,q)=\binom{n}{k-n}^{(q-1)}. \]
Thus, the extended binomial coefficients and their modifications have been studied in various papers and from different perspectives \cite{Andrews, Balakrishnan, Banderier, Caiado, Eger, Eger2, Fahssi, Heubach, Knopfmacher, Star}, and among the properties their distribution is of particular interest. Recently, Eger \cite{Eger2} showed (using a slightly different notation) that
\[\binom{n}{nq/2}^{(q)}\sim \frac{(q+1)^n}{\sqrt{2\pi n \frac{q(q+2)}{12}}},\]
as \(n\rightarrow\infty\), meaning that the quotient of both sides tends to unity. Moreover, based upon numerical simulations \cite{Eger2} the question arises how well those coefficients can be approximated by ``normal approximations'' in general. It is the aim of this note to give a precise and comprehensive answer to this question by establishing a complete asymptotic expansion for the extended binomial coefficients with error terms holding uniformly with respect to all integer \(k\). More precisely, we show the following.
\begin{theorem} For all integers \(N\geq 2\) we have
\[\sqrt{\frac{q(q+2) n}{12}}\frac{1}{(1+q)^n} \binom{n}{k}^{(q)}=\frac{1}{\sqrt{2 \pi}} e^{-x^2/2}+\sum_{\nu=1}^{\left[(N-2)/2\right]} \frac{q_{2\nu}(x)}{n^{\nu}} +o\left(\frac{1}{n^{(N-2)/2}}\right),\]
as \(n\rightarrow \infty\), uniformly with respect to all \(k\in\mathbb{Z}\), with
\[x=\frac{\sqrt{12}}{\sqrt{q(q+2) n}}\left(k-\frac{q}{2}n\right),\]
\end{theorem}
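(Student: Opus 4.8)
The plan is to interpret the normalized extended binomial coefficients as the point probabilities of a sum of independent bounded lattice variables and to run an Edgeworth-type local limit analysis by Fourier inversion. Let $X_1,\dots,X_n$ be independent and uniformly distributed on $\{0,1,\dots,q\}$ and put $S_n=X_1+\cdots+X_n$; comparing the two sides of \eqref{ext} gives $\frac{1}{(1+q)^n}\binom{n}{k}^{(q)}=\mathbb{P}(S_n=k)$. Each $X_j$ has mean $\mu=q/2$ and variance $\sigma^2=q(q+2)/12$, so with $\sigma_n:=\sigma\sqrt{n}=\sqrt{q(q+2)n/12}$ the left-hand side of the theorem is exactly $\sigma_n\,\mathbb{P}(S_n=k)$, and the $x$ of the statement is the standardized variable $x=(k-n\mu)/\sigma_n$. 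Since $X_1$ has characteristic function $\phi(t)=\frac{1}{q+1}\frac{\sin((q+1)t/2)}{\sin(t/2)}e^{iqt/2}$, Fourier inversion for integer-valued variables yields $\mathbb{P}(S_n=k)=\frac{1}{2\pi}\int_{-\pi}^{\pi}\phi(t)^n e^{-ikt}\,dt$. Writing $k=n\mu+x\sigma_n$ and substituting $t=\tau/\sigma_n$ brings this to the normalized form $\sigma_n\,\mathbb{P}(S_n=k)=\frac{1}{2\pi}\int_{-\pi\sigma_n}^{\pi\sigma_n}\psi_n(\tau)\,e^{-ix\tau}\,d\tau$, where $\psi_n(\tau)=\big(\phi(\tau/\sigma_n)e^{-i\mu\tau/\sigma_n}\big)^n$ is the characteristic function of the standardized sum.

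Next I would split the $\tau$-integral at $|\tau|=\delta\sigma_n$ with a fixed $\delta<2\pi/(q+1)$. On the outer part the key input is the elementary inequality $|\sin((q+1)u)|\le(q+1)|\sin u|$, with equality only for $u\in\pi\mathbb{Z}$; it shows that $t=0$ is the only point of $[-\pi,\pi]$ where $|\phi(t)|=1$, so that $\rho:=\sup_{\delta\le|t|\le\pi}|\phi(t)|<1$. Consequently the outer contribution is $O(\sigma_n\rho^n)$, smaller than every power of $n$, and uniform in $k$ because $|e^{-ix\tau}|=1$.

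On the inner part I would expand the cumulant generating function. Writing $Y=X_1-\mu$, which is symmetric about $0$ so that all odd cumulants vanish, one has $\log\psi_n(\tau)=n\,K_Y(i\tau/\sigma_n)$ with $K_Y(s)=\log\frac{\sinh((q+1)s/2)}{(q+1)\sinh(s/2)}=\log\frac{\sinh((q+1)s/2)}{(q+1)s/2}-\log\frac{\sinh(s/2)}{s/2}$, and the classical expansion $\log\frac{\sinh z}{z}=\sum_{m\ge1}\frac{2^{2m}B_{2m}}{2m\,(2m)!}z^{2m}$ furnishes every even cumulant $\kappa_{2m}$ explicitly through Bernoulli numbers. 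Because $\kappa_2=\sigma^2$, the quadratic term reproduces $-\tau^2/2$, while the cumulant $\kappa_{2j}$ contributes a term of order $n^{1-j}$; hence $\log\psi_n(\tau)=-\tau^2/2+\sum_{\nu=1}^{M}P_\nu(\tau)/n^\nu+(\text{remainder})$ with even polynomials $P_\nu$, and exponentiating gives $\psi_n(\tau)=e^{-\tau^2/2}\big(1+\sum_{\nu=1}^{M}R_\nu(\tau)/n^\nu+\cdots\big)$. Inserting this and using the transform identity $\frac{1}{2\pi}\int_{-\infty}^{\infty}e^{-\tau^2/2}(i\tau)^m e^{-ix\tau}\,d\tau=\frac{1}{\sqrt{2\pi}}e^{-x^2/2}H_m(x)$, with $H_m$ the (probabilists') Hermite polynomials, identifies the leading term $\frac{1}{\sqrt{2\pi}}e^{-x^2/2}$ and exhibits each coefficient $q_{2\nu}(x)$ as $\frac{1}{\sqrt{2\pi}}e^{-x^2/2}$ times a finite sum of Hermite polynomials whose weights carry the Bernoulli numbers from the $\kappa_{2m}$; the absence of odd orders is forced by the symmetry of $Y$. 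Replacing the finite interval by $\mathbb{R}$ in these Gaussian integrals costs only an $x$-uniform, exponentially small error.

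The crux, and the step I expect to be most delicate, is the uniform-in-$x$ control of the remainder in this expansion, the difficulty being that the polynomials $P_\nu(\tau)$ are not small when $|\tau|$ approaches $\delta\sigma_n$. I would handle it with two ingredients valid on $|\tau|\le\delta\sigma_n$: the analytic remainder bound $\big|\log\psi_n(\tau)+\tau^2/2-\sum_{\nu\le M}P_\nu(\tau)/n^\nu\big|\le C\,|\tau|^{2M+4}/n^{M+1}$, coming from the analyticity of $K_Y$ on $|s|<2\pi/(q+1)$, and the Gaussian majorant $|\psi_n(\tau)|\le e^{-c\tau^2}$, coming from $|\phi(t)|\le e^{-\sigma^2 t^2/4}$ near the origin. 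Splitting the inner region once more at $|\tau|=n^{1/6}$ then finishes the estimate: for $|\tau|\le n^{1/6}$ the exponential may be expanded with remainder dominated by a polynomial in $|\tau|$ times $n^{-(M+1)}e^{-c\tau^2}$, which integrates to $O(n^{-(M+1)})$ uniformly in $x$ since $|e^{-ix\tau}|=1$, whereas for $n^{1/6}\le|\tau|\le\delta\sigma_n$ both $\psi_n(\tau)$ and every approximating term are bounded by $e^{-c n^{1/3}}$ and are therefore negligible. Choosing $M=\lceil(N-2)/2\rceil$ converts this into the stated error $o(n^{-(N-2)/2})$ and, only even powers surviving, aligns the sum with $\sum_{\nu=1}^{[(N-2)/2]}q_{2\nu}(x)/n^\nu$. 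The remaining task is the explicit bookkeeping expressing each $q_{2\nu}$ through Hermite polynomials and Bernoulli numbers, which is routine once the expansion is organized in this way.
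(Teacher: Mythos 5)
Your proposal is correct in substance and reaches the theorem by the same overall strategy as the paper: interpret $\frac{1}{(1+q)^n}\binom{n}{k}^{(q)}$ as $P(S_n=k)$ for a sum of i.i.d.\ uniform variables on $\{0,\dots,q\}$, compute all cumulants explicitly through Bernoulli numbers, and observe that the vanishing of the odd cumulants collapses the Edgeworth series to integer powers of $1/n$. The difference is where the analytic work is placed. The paper quotes Petrov's complete local limit expansion (Theorem \ref{EE}) as a black box, so its proof reduces to Lemma \ref{CU} (cumulants via the $\cotan z - 1/z$ expansion, equivalent to your $\log\frac{\sinh z}{z}$ series) plus the combinatorial observation that any solution of $k_1+2k_2+\cdots+\nu k_\nu=\nu$ with a positive entry at an odd index contributes zero to \eqref{qq}. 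You instead re-derive the needed case of Petrov's theorem from scratch: Fourier inversion on the lattice, the bound $\sup_{\delta\le|t|\le\pi}|\varphi_{X_1}(t)|<1$ (your sine inequality is the concrete verification of the maximal-span-one hypothesis), the Gaussian majorant near the origin, and the two-scale splitting at $|\tau|=n^{1/6}$ to control the remainder uniformly in $x$. That self-contained route is longer but makes the source of the uniformity and of the $o(n^{-(N-2)/2})$ error visible, and your symmetry argument (all odd cumulants of $X_1-\mu$ vanish) explains the disappearance of the half-integer powers more transparently than the paper's partition-counting argument, though the two are of course equivalent. What remains to make your sketch a complete proof is only the routine bookkeeping you defer at the end, namely matching the coefficient of $n^{-\nu}$ in $\exp\bigl(\sum_j P_j(\tau)/n^j\bigr)$ with the partition sum in \eqref{q}; this is exactly the paper's final ``elementary computation.''
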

where the functions \(q_{2\nu}(x)\) are given explicitly as sums of Hermite polynomials and Bernoulli numbers (see Theorem \ref{Main} below for the exact formulae).
Although we only deal with the very basic situation of the extended binomial coefficients in (\ref{ext}) here, the presented approach is a general one, which admits the derivation of (complete) asymptotic expansions in many applications. However, it is not always possible to obtain the involved quantities in a very explicit form, which is an instance making the case of the extended binomial coefficients further interesting and worth to be presented.

\section{Proof of the main result}
First of all we fix some notations following Petrov \cite{Petrov}. For a (real) random variable \(X\) we denote its characteristic function by
\[\varphi_X (t)=Ee^{itX}, \quad t\in \mathbb{R},\]
where, as usual, \(E\) means the mathematical expectation with respect to the underlying probability distribution. If \(X\) has finite moments up to \(k\)-th order, then \(\varphi_X\) is \(k\) times continuously differentiable on \(\mathbb{R}\) and we have
\[\frac{d^k}{dt^k} \varphi_X (t) \Big\vert_{t=0} = \frac{1}{i^k} EX^k.\]
Moreover, in this case we define the cumulants of order \(k\) by
\[\gamma_k =\frac{1}{i^k} \frac{d^k}{dt^k} \log \varphi_X (t)\Big\vert_{t=0},\]
where the logarithm takes its principal branch. Now, let \(\left(X_n\right)\) be a sequence of independent integer-valued random variables having a common distribution and suppose that for all positive integer values of \(k\) we have
\[E\vert X_1\vert^k < \infty\]
and
\[EX_1=\mu,\quad Var X_1 =\sigma^2 >0. \]
Thus, for the sum given by
\[S_n=\sum_{\nu=1}^n X_{\nu}\]
we obtain
\[ES_n=n \mu,\quad Var S_n = n\sigma^2,\]
and for integer \(k\) we define the probabilities
\[p_n(k)=P\left(S_n =k\right).\]
Furthermore, we introduce the Hermite polynomials (in the probabilist's version)
\[H_m (x)=(-1)^m e^{x^2/2} \frac{d^m}{dx^m} e^{-x^2/2},\]
and for positive integers \(\nu\) we define the functions
\begin{equation}\label{qq}q_{\nu} (x) = \frac{1}{\sqrt{2 \pi}} e^{-x^2 \slash 2} 
\sum_{k_1, \ldots, k_{\nu} \geq 0 \atop k_1 + 2 k_2 + \cdots + \nu k_{\nu} = \nu} ~
H_{\nu + 2s} (x) ~ \prod\limits^{\nu}_{m=1} ~ \frac{1}{k_m!} 
\left( \frac{\gamma_{m+2}}{(m+2)! \sigma^{m+2}} \right)^{k_m},
\end{equation}
where \(s = k_1 + \cdots + k_\nu\) and \(\gamma_{m+2}\) denotes the cumulant of order \(m+2\) of \(X_1\).

Finally, we demand (for convenience) that the maximal span of the distribution of \(X_1\) is equal to one. This means that there are no numbers \(a\) and \(h>1\) such that the values taken on by \(X_1\) with probability one can be expressed in the form \(a+hk\) (\(k\in\mathbb{Z}\)). Under all these assumptions we have the following complete asymptotic expansion in the sense of a local central limit theorem  \cite[p.\ 205]{Petrov}.
\begin{theorem}\label{EE} For all integers \(N\geq 2\) we have
\begin{equation}\label{EE1}\sigma \sqrt{n}p_n(k)=\frac{1}{\sqrt{2 \pi}} e^{-x^2/2}+\sum_{\nu=1}^{N-2} \frac{q_{\nu}(x)}{n^{\nu/2}} +o\left(\frac{1}{n^{(N-2)/2}}\right),
\end{equation}
as \(n\rightarrow \infty\), uniformly with respect to all \(k\in\mathbb{Z}\), where we have 
\[x=\frac{k-n\mu}{\sigma \sqrt{n}}.\]
\end{theorem}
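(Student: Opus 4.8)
The plan is to establish the expansion by Fourier inversion, writing the lattice probability as an integral of the characteristic function over one period and then comparing it term by term with the Fourier representation of the right-hand side of \eqref{EE1}. Since $X_1$ is integer-valued with maximal span one, the inversion formula gives $p_n(k) = \frac{1}{2\pi}\int_{-\pi}^{\pi}\varphi_{X_1}(t)^n e^{-itk}\,dt$. Substituting $t = s/(\sigma\sqrt{n})$ and using $k = n\mu + \sigma\sqrt{n}\,x$ turns this into $\sigma\sqrt{n}\,p_n(k) = \frac{1}{2\pi}\int_{-\pi\sigma\sqrt{n}}^{\pi\sigma\sqrt{n}}\psi_n(s)\,e^{-isx}\,ds$, where $\psi_n(s) = e^{-is n\mu/(\sigma\sqrt{n})}\varphi_{X_1}(s/(\sigma\sqrt{n}))^n$ is the characteristic function of the normalized sum $(S_n - n\mu)/(\sigma\sqrt{n})$. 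The Gaussian main term and each function $q_\nu(x)$ admit an analogous representation over all of $\mathbb{R}$ through the identity $\frac{1}{2\pi}\int_{-\infty}^{\infty}(is)^m e^{-s^2/2}e^{-isx}\,ds = \frac{1}{\sqrt{2\pi}}H_m(x)e^{-x^2/2}$, which is precisely the source of the Hermite polynomials appearing in \eqref{qq}.

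First I would split the integration range at a small fixed $\delta>0$ into a central part $|s|\le\delta\sigma\sqrt{n}$ and a tail $\delta\sigma\sqrt{n}\le|s|\le\pi\sigma\sqrt{n}$. On the tail the span-one hypothesis yields $\sup_{\delta\le|t|\le\pi}|\varphi_{X_1}(t)| = q < 1$, so that $|\psi_n(s)|\le q^n$ and the tail contributes $O(\sqrt{n}\,q^n)$, which is $o(n^{-M})$ for every $M$ uniformly in $k$; the corresponding Gaussian tails decay even faster. On the central part I would insert the cumulant expansion $\log\varphi_{X_1}(t) = i\mu t - \tfrac{\sigma^2}{2}t^2 + \sum_{j=3}^{N}\frac{\gamma_j}{j!}(it)^j + o(t^N)$, valid because all moments of $X_1$ are finite. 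After scaling, the linear term cancels the centering, the quadratic term produces $-s^2/2$, and the higher cumulants appear as $\sum_{j\ge 3}\frac{\gamma_j}{j!\,\sigma^j}\frac{(is)^j}{n^{(j-2)/2}}$. Exponentiating and expanding in powers of $n^{-1/2}$ gives $\psi_n(s) = e^{-s^2/2}\bigl(1 + \sum_{\nu=1}^{N-2} P_\nu(is)\,n^{-\nu/2}\bigr)$ plus a remainder, where the coefficient polynomials $P_\nu$ carry precisely the partition sums $\sum_{k_1+2k_2+\cdots+\nu k_\nu=\nu}\prod_{m=1}^{\nu}\frac{1}{k_m!}\bigl(\frac{\gamma_{m+2}}{(m+2)!\,\sigma^{m+2}}\bigr)^{k_m}$ from \eqref{qq}.

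I would then invert term by term: applying the Hermite identity above to each monomial $(is)^{\nu+2s}$ in $P_\nu(is)$ reproduces $q_\nu(x)$ exactly as stated, the total Hermite degree being $\nu + 2s$ with $s = k_1 + \cdots + k_\nu$ the number of cumulant factors, since $\sum_m (m+2)k_m = \sum_m m k_m + 2\sum_m k_m = \nu + 2s$. What remains is to bound the two sources of error, and this I expect to be the main obstacle: first, replacing the central integral by an integral over all of $\mathbb{R}$ for the approximating terms, whose error is again super-exponentially small by Gaussian decay; and second, controlling the remainder of the cumulant expansion integrated over $|s|\le\delta\sigma\sqrt{n}$.

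The decisive point is uniformity in $k$: since the factor $e^{-isx}$ has modulus one, it suffices to bound the difference between $\psi_n(s)$ and its Edgeworth approximation in $L^1(ds)$ over the central region by $o(n^{-(N-2)/2})$, a bound free of $x$, whence uniformity in $k$ follows automatically. Establishing this $L^1$ estimate — keeping the Taylor remainder of $\log\varphi_{X_1}$ uniform and ensuring that for $|s|$ of order $\sqrt{n}$ the error does not overwhelm the gain $n^{-(N-2)/2}$ — is the technical heart of the argument, and it is here that the finiteness of all moments of $X_1$ together with the sharp tail bound $q<1$ are both essential. The full details follow Petrov \cite{Petrov}.
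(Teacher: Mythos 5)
The paper does not prove Theorem \ref{EE} at all: it is quoted as a known result from Petrov \cite[p.~205]{Petrov}, and the proof in that reference is precisely the Fourier-inversion/Edgeworth argument you sketch. Your outline is sound --- the lattice inversion formula, the Hermite identity $\frac{1}{2\pi}\int_{-\infty}^{\infty}(is)^m e^{-s^2/2}e^{-isx}\,ds = \frac{1}{\sqrt{2\pi}}H_m(x)e^{-x^2/2}$, the span-one tail bound, the partition-sum bookkeeping matching \eqref{qq}, and the $x$-free $L^1$ control of the central region that yields uniformity in $k$ are exactly the ingredients of that proof --- so your proposal matches the cited proof in approach, and it correctly isolates the one genuinely technical step (the $L^1$ remainder estimate for $|s|$ up to order $\sqrt{n}$), which you, like the paper itself, delegate to Petrov.
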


In the following we choose \(X_1\) to take the integer values \(\{0,\ldots,q\}\) with 
\[P(X_1=k)=\frac{1}{q+1}, \quad k\in\{0,\ldots,q\}.\]
Hence, we obtain
\begin{equation}\label{P}p_n(k)=P(S_n=k)=\frac{1}{(1+q)^n} \binom{n}{k}^{(q)}, \quad k\in\mathbb{Z}.
\end{equation}
It is our aim to apply Theorem \ref{EE} in full generality and we want compute all cumulants as explicit as possible.

\begin{lemma} \label{CU}For the \(k\)-th order cumulant \(\gamma_k\) of \(X_1\) we have
\begin{equation}\label{GAMMA}\gamma_k= \begin{cases} \frac{q}{2}, &\text{if}~ k=1; \\
0, & \text{if}~ k ~\text{odd}~\text{and}~ k>1;\\
\frac{\mathcal{B}_{2l}}{2l} \left((q+1)^{2l}-1\right), & \text{if}~ k=2l, l\geq 1,\end{cases}
\end{equation}
where $\mathcal{B}_{\nu},\, \nu \geq 0$, denote the Bernoulli numbers, e.g., \cite[p.\ 22]{Grad}.
\end{lemma}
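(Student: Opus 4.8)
The plan is to compute the cumulant generating function of $X_1$ explicitly and then read off its Taylor coefficients. Since $X_1$ is uniform on $\{0,1,\ldots,q\}$, its characteristic function is a finite geometric sum,
\[\varphi_{X_1}(t)=\frac{1}{q+1}\sum_{k=0}^{q}e^{itk}=\frac{1}{q+1}\cdot\frac{e^{i(q+1)t}-1}{e^{it}-1},\]
which, after inserting the factors $i(q+1)t$ and $it$, I would rewrite as the ratio
\[\varphi_{X_1}(t)=\left.\frac{e^{s}-1}{s}\right|_{s=i(q+1)t}\Big/\left.\frac{e^{s}-1}{s}\right|_{s=it}.\]
Writing $G(s)=\log\frac{e^s-1}{s}$, which is analytic near $s=0$ with $G(0)=0$, the definition of the cumulants together with the standard expansion $\log\varphi_{X_1}(t)=\sum_{k\ge1}\gamma_k(it)^k/k!$ reduces the whole task to expanding the single function $G$, because $\log\varphi_{X_1}(t)=G\big(i(q+1)t\big)-G(it)$.

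The one genuine computation is therefore the power series of $G$, and this is where the Bernoulli numbers enter. Differentiating gives $G'(s)=\frac{e^s}{e^s-1}-\frac1s=\frac{1}{1-e^{-s}}-\frac1s$, and from the generating function $\frac{s}{e^s-1}=\sum_{n\ge0}\mathcal{B}_n\frac{s^n}{n!}$ (recall $\mathcal{B}_0=1$, $\mathcal{B}_1=-\tfrac12$, and $\mathcal{B}_n=0$ for odd $n\ge3$) I would extract
\[G'(s)=\frac12+\sum_{l\ge1}\mathcal{B}_{2l}\frac{s^{2l-1}}{(2l)!}.\]
Integrating from $0$ and using $G(0)=0$ yields the clean expansion $G(s)=\frac{s}{2}+\sum_{l\ge1}\frac{\mathcal{B}_{2l}}{2l\,(2l)!}\,s^{2l}$, in which only even powers beyond the linear term survive.

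Substituting $s=i(q+1)t$ and $s=it$ and subtracting, the linear terms combine to $\frac{q}{2}(it)$, while each even term is multiplied by the factor $(q+1)^{2l}-1$ and the odd powers above the first cancel. Matching the resulting series against $\sum_k\gamma_k(it)^k/k!$ then gives $\gamma_1=q/2$, $\gamma_k=0$ for odd $k>1$, and $\gamma_{2l}=\frac{\mathcal{B}_{2l}}{2l}\big((q+1)^{2l}-1\big)$, which is exactly (\ref{GAMMA}). The only point requiring care is the bookkeeping between the characteristic-function normalization $1/i^k$ in the definition of $\gamma_k$ and the real coefficients of $G$; apart from tracking these powers of $i$, the argument is a direct comparison of coefficients and I anticipate no serious obstacle.
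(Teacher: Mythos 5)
Your proof is correct. It follows the same overall strategy as the paper's --- reduce $\log\varphi_{X_1}$ to the difference of a single analytic function evaluated at $i(q+1)t$ and at $it$, then bring in the Bernoulli numbers --- but the execution of the key step is genuinely different. The paper first splits off the linear term $\frac{q}{2}it$ by rewriting $\frac{e^{i(q+1)t}-1}{e^{it}-1}$ as $e^{iqt/2}\,\frac{\sin\frac{q+1}{2}t}{\sin\frac{t}{2}}$, and then invokes the classical expansion of $\cotan z-\frac{1}{z}$ in Bernoulli numbers, differentiating term by term under the summation sign to read off $\gamma_k$. You instead work directly with $G(s)=\log\frac{e^s-1}{s}$, obtain its Taylor series by integrating the defining generating function $\frac{s}{e^s-1}=\sum_n\mathcal{B}_n s^n/n!$, and let the linear term $\frac{q}{2}it$ fall out of the $\mathcal{B}_1$-contribution automatically rather than being extracted by hand. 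Your route avoids the trigonometric detour entirely and replaces the cotangent expansion (itself usually derived from that same generating function) by a one-line integration, and comparing Taylor coefficients of $\log\varphi_{X_1}(t)=\sum_k\gamma_k(it)^k/k!$ is cleaner than repeated differentiation at $t=0$; it is arguably the more economical derivation. Both arguments require the same small amount of care with the principal branch of the logarithm near $t=0$ and with the bookkeeping of powers of $i$, which you flag correctly.
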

\begin{proof} First, we observe that the characteristic function of \(X_1\) is given by
\[\varphi_{X_1}(t)=\frac{1+e^{it}+\cdots+e^{qit}}{1+q}.\]
According to the definition of the cumulants we obtain for a positive integer \(k\)
\begin{align*}\gamma_k &=\frac{1}{i^k} \frac{d^k}{dt^k} \log \varphi_{X_1} (t)\Big\vert_{t=0}\\
&=\frac{1}{i^k} \frac{d^k}{dt^k} \left\{\log\left(1+e^{it}+\cdots+e^{qit}\right) -\log(1+q)\right\}\Big\vert_{t=0}\\
&=\frac{1}{i^k} \frac{d^k}{dt^k}\log\left(\frac{e^{(q+1)it}-1}{e^{it}-1}\right)\Big\vert_{t=0}\\
&=\frac{1}{i^k} \frac{d^k}{dt^k}\left\{\frac{q}{2}it+\log \left(\frac{\sin \frac{q+1}{2}t}{\sin \frac{t}{2}}\right)\right\} \Big\vert_{t=0}\\
&=\frac{q}{2}\delta_{k,1}+\frac{1}{i^k} \frac{d^k}{dt^k}\left\{\log \left(\frac{\sin \frac{q+1}{2}t}{ \frac{q+1}{2} t}\right)-\log \left(\frac{\sin \frac{t}{2}}{\frac{t}{2}}\right)\right\}\Big\vert_{t=0},
\end{align*}
where \(\delta_{k,1}\) denotes the Kronecker delta.
Using
\[\frac{d}{dz} \log \left(\frac{\sin z}{z}\right)=\cotan z -\frac{1}{z}\]
yields
\[\gamma_k=\frac{q}{2}\delta_{k,1}+\frac{1}{i^k} \frac{d^{k-1}}{dt^{k-1}}\left\{\frac{q+1}{2}\left(\cotan \frac{q+1}{2} t -\frac{2}{(q+1)t}\right)-\frac{1}{2}\left(\cotan \frac{t}{2}  -\frac{2}{t}\right)\right\}\Big\vert_{t=0}.\]
Now, making use of the following expansion (see, e.g., \cite[p.\ 35]{Grad})
\[\cotan z - \frac{1}{z} = \sum\limits^{\infty}_{m = 1} (-1)^{m} \frac{4^{m}}{(2 m)!} \mathcal{B}_{2 m} z^{2 m - 1}
~~,~~ 0 < |z| < \pi ,\]
after some algebra we obtain
\[\gamma_k=\frac{q}{2}\delta_{k,1}+\frac{1}{i^k} \frac{d^{k-1}}{dt^{k-1}}\sum\limits^{\infty}_{m = 1} (-1)^{m} \frac{\mathcal{B}_{2 m}}{(2 m)!}  \left((q+1)^{2m}-1\right)t^{2 m - 1}\Big\vert_{t=0}.\]
Carrying out the differentiation under the summation sign immediately gives us (\ref{GAMMA}).
\end{proof}

\begin{remark} As an immediate consequence of Lemma \ref{CU} we obtain
\[EX_1 =\mu=\gamma_1 =\frac{q}{2}\]
and, as we know \(\mathcal{B}_2=\frac{1}{6}\),
\[Var X_1 = \sigma^2=\gamma_2 =\frac{\mathcal{B}_2}{2}\left((q+1)^2-1\right)=\frac{q(q+2)}{12}. \]
\end{remark}

We now are ready to state the main theorem in form of a complete asymptotic expansion with explicit coefficients for the extended binomial coefficients \(\binom{n}{k}^{(q)}\).

\begin{theorem}\label{Main}For all integers \(N\geq 2\) we have
\[\sqrt{\frac{q(q+2) n}{12}}\frac{1}{(1+q)^n} \binom{n}{k}^{(q)}=\frac{1}{\sqrt{2 \pi}} e^{-x^2/2}+\sum_{\nu=1}^{\left[(N-2)/2\right]} \frac{q_{2\nu}(x)}{n^{\nu}} +o\left(\frac{1}{n^{(N-2)/2}}\right),\]
as \(n\rightarrow \infty\), uniformly with respect to all \(k\in\mathbb{Z}\), with
\[x=\frac{\sqrt{12}}{\sqrt{q(q+2) n}}\left(k-\frac{q}{2}n\right),\]
and
\begin{align}\label{q}&q_{2\nu} (x) = \frac{1}{\sqrt{2 \pi}} \left(\frac{12}{q(q+2)}\right)^{\nu} e^{-x^2 \slash 2}\\ \nonumber
&\times \sum_{k_2, k_4, \ldots, k_{2\nu} \geq 0 \atop k_2 + 2 k_4 + \cdots + \nu k_{2\nu} = \nu} ~
H_{2(\nu + s)} (x) \left(\frac{6}{q(q+2)}\right)^s \prod\limits^{\nu}_{m=1} ~ \frac{1}{k_{2m}!} 
\left( \frac{\mathcal{B}_{2(m+1)} \left((q+1)^{2m+2}-1\right)}{(2m+2)! (m+1) } \right)^{k_{2m}},
\end{align}
where \(s=k_2+k_4+\cdots+k_{2\nu}\).
\end{theorem}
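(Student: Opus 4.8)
The plan is to specialize the general local central limit expansion of Theorem \ref{EE} to the uniform random variable $X_1$ on $\{0,1,\ldots,q\}$ and then to simplify the resulting coefficients by inserting the explicit cumulants of Lemma \ref{CU}. First I would check that the hypotheses of Theorem \ref{EE} hold: $X_1$ is bounded, so all absolute moments are finite; by (\ref{GAMMA}) we have $\sigma^2=\gamma_2=q(q+2)/12>0$; and since $X_1$ takes the consecutive integer values $0,1,\ldots,q$ with positive probability, the maximal span of its distribution equals one. Substituting $\mu=q/2$ and $\sigma=\sqrt{q(q+2)/12}$, the normalization $\sigma\sqrt{n}$ becomes $\sqrt{q(q+2)n/12}$ and the standardized variable $x=(k-n\mu)/(\sigma\sqrt{n})$ takes exactly the stated form; together with the identity (\ref{P}) this turns the left-hand side of (\ref{EE1}) into the left-hand side of the asserted expansion.

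The decisive simplification is that every coefficient $q_\nu$ with odd index $\nu$ vanishes identically. Indeed, each product in the defining sum (\ref{qq}) carries the factor $\gamma_{m+2}^{k_m}$, and by (\ref{GAMMA}) the cumulant $\gamma_{m+2}$ is zero whenever $m$ is odd (since then $m+2$ is odd and $>1$). Hence only multi-indices with $k_m=0$ for all odd $m$ contribute, and for these the constraint $k_1+2k_2+\cdots+\nu k_\nu=\nu$ reduces to $2k_2+4k_4+\cdots=\nu$, whose left-hand side is even. For odd $\nu$ this has no solution, so $q_\nu\equiv 0$. Consequently the sum over $\nu=1,\ldots,N-2$ in (\ref{EE1}) collapses onto the even indices $\nu=2j$, each contributing the power $n^{-\nu/2}=n^{-j}$, which produces precisely the stated sum over $j=1,\ldots,[(N-2)/2]$ with terms $q_{2\nu}(x)/n^\nu$.

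It remains to cast the surviving coefficient $q_{2\nu}$ into the closed form (\ref{q}), which is bookkeeping in (\ref{qq}) with $\nu$ replaced by $2\nu$. After discarding the odd indices I would write the even ones as $m=2\ell$ with $\ell=1,\ldots,\nu$, so the sum runs over $k_2,k_4,\ldots,k_{2\nu}$ subject to $k_2+2k_4+\cdots+\nu k_{2\nu}=\nu$ and $s=k_2+\cdots+k_{2\nu}$, and $H_{2\nu+2s}$ becomes $H_{2(\nu+s)}$; here $\ell$ plays the role of the product index $m$ in (\ref{q}). Collecting the powers of $\sigma$ gives $\sigma^{-\sum_\ell(2\ell+2)k_{2\ell}}=\sigma^{-(2\nu+2s)}=(12/(q(q+2)))^{\nu+s}$, which I would split into the constant factor $(12/(q(q+2)))^\nu$ pulled to the front and a residual $(12/(q(q+2)))^s$. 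Substituting $\gamma_{2\ell+2}=\frac{\mathcal{B}_{2(\ell+1)}}{2(\ell+1)}((q+1)^{2\ell+2}-1)$ from (\ref{GAMMA}) yields the Bernoulli coefficient, and the factor $\tfrac12$ inside $\gamma_{2\ell+2}$ contributes, over the product, a total $2^{-s}$; merging this with the residual $\sigma$-power turns it into the compact factor $(6/(q(q+2)))^s$ in (\ref{q}). The only delicate point is precisely this accounting of the $\sigma$-powers and of the factor of two from the cumulant normalization, since a slip there would misplace the split between the $(12/(q(q+2)))^\nu$ and $(6/(q(q+2)))^s$ factors; the rest is direct substitution.
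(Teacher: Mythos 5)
Your proposal is correct and follows essentially the same route as the paper: apply Theorem \ref{EE} to the uniform distribution on $\{0,\ldots,q\}$, use Lemma \ref{CU} to show that all $q_\nu$ with odd $\nu$ vanish (so only the even-index terms $q_{2\nu}/n^\nu$ survive), and then substitute the explicit cumulants into (\ref{qq}); your accounting of the $\sigma$-powers and of the factor $2^{-s}$ producing the split into $(12/(q(q+2)))^\nu$ and $(6/(q(q+2)))^s$ is exactly the ``elementary computation'' the paper leaves implicit, and it checks out.
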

\begin{proof} The proof is based on an application of Theorem \ref{EE} to the probabilities defined in (\ref{P}). First we observe that in our situation the functions given in (\ref{qq}) vanish identically for odd indices, which turns out to be a consequence of (\ref{GAMMA}). Indeed, if \(\nu=2l+1\) for an integer \(l \geq 0\), then in every solution \(k_1,\ldots, k_{2l+1} \geq 0\) of the equation
\[k_1 +2k_2+\cdots+(2l+1)k_{2l+1}=2l+1\]
there is at least one odd index \(i\) with \(k_i >0\). Consequently, using (\ref{GAMMA}) we have
\[\prod\limits^{2l+1}_{m=1} ~ \frac{1}{k_m!} 
\left( \frac{\gamma_{m+2}}{(m+2)! \sigma^{m+2}} \right)^{k_m}=0,\]
from which follows that \(q_{2l+1}(x)\) vanishes identically. Thus, only the functions \(q_{2\nu}(x)\) appear in (\ref{EE1}) and here we have
\[q_{2\nu} (x) = \frac{1}{\sqrt{2 \pi}} e^{-x^2 \slash 2} 
\sum_{k_1, \ldots, k_{2\nu} \geq 0 \atop k_1 + 2 k_2 + \cdots + 2\nu k_{2\nu} = 2\nu} ~
H_{2(\nu + s)} (x) ~ \prod\limits^{2\nu}_{m=1} ~ \frac{1}{k_m!} 
\left( \frac{\gamma_{m+2}}{(m+2)! \sigma^{m+2}} \right)^{k_m},
\]
where \(s = k_1 + \cdots + k_{2\nu}\). An analogous argument as in the odd case above shows that a solution \(k_1,\ldots,k_{2\nu}\) of the equation
\[k_1 +2k_2+\cdots+2\nu k_{2\nu}=2\nu\]
with a positive entry at an odd index does not give any contribution to the whole sum, so that we can write
\[q_{2\nu} (x) = \frac{1}{\sqrt{2 \pi}} e^{-x^2 \slash 2} 
\sum_{k_2, k_4, \ldots, k_{2\nu} \geq 0 \atop k_2 + 2 k_4+ \cdots + \nu k_{2\nu} = \nu} ~
H_{2(\nu + s)} (x) ~ \prod\limits^{\nu}_{m=1} ~ \frac{1}{k_{2m}!} 
\left( \frac{\gamma_{2m+2}}{(2m+2)! \sigma^{2m+2}} \right)^{k_{2m}},
\]
where \(s = k_2 + k_4+ \cdots + k_{2\nu}\). Now, taking the explicit form of the cumulants in (\ref{GAMMA}) into account, after some elementary computation we obtain (\ref{q}).

\end{proof}

\begin{remark} As a concluding remark we state the meaning of Theorem \ref{Main} for \(N=5\) explicitly. Using the known facts
\[H_4 (x)=x^4 -6x^2 +3,\quad \mathcal{B}_4=-\frac{1}{30},\]
we obtain 
\[\sqrt{\frac{q(q+2) n}{12}}\frac{1}{(1+q)^n} \binom{n}{k}^{(q)}=\frac{1}{\sqrt{2 \pi}} e^{-x^2/2}\left\{1-\frac{\left((q+1)^4-1\right)\left(x^4 -6x^2 +3\right)}{20nq^2 (q+2)^2}\right\} +o\left(\frac{1}{n^{3/2}}\right),\]
as \(n\rightarrow \infty\), uniformly with respect to all \(k\in\mathbb{Z}\), where we have
\[x=\frac{\sqrt{12}}{\sqrt{q(q+2) n}}\left(k-\frac{q}{2}n\right).\]
\end{remark}

\section{Acknowledgments} This work is supported by KU Leuven research grant OT\slash12\slash073 and the Belgian Interuniversity Attraction Pole P07/18.

\end{document}